\numberwithin{equation}{section}
\theoremstyle{plain}
\newtheorem{thm}{Theorem}[section]
\newtheorem{prop}[thm]{Proposition}
\newtheorem{cor}[thm]{Corollary}
\newtheorem{lem}[thm]{Lemma}
\newtheorem{theorem*}{Theorem}[]
\theoremstyle{definition}
\theoremstyle{remark}
\newtheorem{rem}[thm]{Remark}
\newcommand{\C}{\mathbb{C}}
\newcommand{\R}{\mathbb{R}}
\newcommand{\K}{\mathbb{K}}
\newcommand{\proj}{\mathbb{P}}
\DeclareMathOperator{\Int}{interior}
\newcommand{\ps}{\psi}
\newcommand{\Ps}{\Psi}
\newcommand{\Ph}{\Phi}
\newcommand{\f}{f}
\title{Algebraic Stratified General Position and Transversality}
\author{ Clint McCrory, Adam Parusi\'nski, Lauren\c tiu P\u aunescu }
\begin{document}
\address {Mathematics Department, University of Georgia, Athens GA
30602, USA}

\email{clint@math.uga.edu}

\address {Univ. Nice Sophia Antipolis, CNRS,  LJAD, UMR 7351, 06108 Nice, France}

\email{adam.parusinski@unice.fr}
\address{School of Mathematics, University of Sydney,
  Sydney, NSW, 2006, Australia }%
\email{laurent@maths.usyd.edu.au}%

\thanks{Partially supported  by ANR project STAAVF (ANR-2011 BS01 009), by ARC DP150103113, by Sydney University BSG2014 and by Faculty of Science PSSP}
\keywords{stratification, general position, transversality, submersive family, Whitney interpolation, arc-wise analytic trivialization}
\subjclass[2010]{32S60, 14P25, 14F45}


\begin{abstract}
The method of Whitney interpolation is used to construct, for any real or complex projective algebraic variety, a stratified submersive family of self-maps that yields stratified general position and transversality theorems for semialgebraic subsets.

 \end{abstract}

\maketitle

In classical algebraic topology, general position of chains was used by Lefschetz to define the intersection pairing on the homology of a manifold. The stratified general position of the first author \cite{Mc} was used by Goresky and MacPherson to define the intersection pairing on the intersection homology of a complex algebraic variety. Murolo, Trotman, and du Plessis \cite{MTdP} proved a stratified transversality theorem for the most important types of regular stratifications, and they used their result to define cup and cap product for certain stratified homology theories.

Here we prove stratified general position and transversality theorems for semialgebraic subsets of algebraic stratifications. We use the methods of \cite{PP}, which involve Zariski equisingularity and Whitney interpolation. Our theorems can be used to define the intersection pairing for the intersection homology of semialgebraic chains. (In their initial paper \cite{GM}, Goresky and MacPherson considered  piecewise linear chains with respect to a triangulation.) In a subsequent paper \cite{MP1} we will apply our transversality theorem to define an intersection pairing for \emph{real intersection homology}, an analog of intersection homology for real algebraic varieties.


\section{Submersive families}\label{submersive}

Let $T$ and $M$ be smooth (that is, $C^1$) manifolds  and let $\Ps : T\times M  \to M$ be a smooth mapping. 
Consider  $\Ps_t : M \to M$, $\Ps_t(x) = \Ps (t, x)$, and $\Ps ^x : T \to  M$, $\Ps ^x(t) = \Ps (t, x)$.   We say  $\Ps$ is a family of diffeomorphisms if for all $t\in T$ the map $\Ps_t$ is a diffeomorphism. The family $\Ps $ is called \emph{submersive} if, for each $(t, x) \in T\times M$, the differential $D\Ps ^x$ at $t$ is surjective (\emph{cf}.\ \cite{SMT} I.1.3.5).
For a stratified set $X= \bigsqcup S_i$ we say that $\Ps  : T \times X \to X$ is \emph{a stratified submersive family of diffeomorphisms} 
if for each stratum $S_j$,  we have $\Ps(T\times S_j)\subset S_j$, and the map $\Ps : T \times S_j  \to S_j$ is a submersive family of diffeomorphisms.  

By an algebraic, resp. projective, variety we mean an algebraic, resp. projective, variety over 
$\K= \C$ or $\R$.  
 All sets and functions are supposed to be at least semialgebraic (in the real algebraic sense).    
 Our main goal is to show the following theorem. 

\begin{thm}\label{projective}
Let $\mathcal V = \{V_i\}$ be a finite family of algebraic subsets of projective space $\proj^n$. There exists an algebraic stratification $\mathcal S = \{S_j\}$ of $\proj^n$ compatible with each $V_i$ and a semialgebraic stratified submersive family of diffeomorphisms  $\Ps : U\times \proj^ n  \to \proj^n$,  where $U$ is an open neighborhood of the origin in $\K^{n+1}$, 
such that $\Ps(0,x)=x$ for all $x\in \proj^n$.  Moreover, the map $\Ph:U\times \proj^ n  \to U\times \proj^n$, $\Ph(t,x) = (t,\Ps(t,x))$, is an arc-wise analytic trivialization of the projection $U\times \proj^n\to U$.
\end{thm}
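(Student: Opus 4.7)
My plan is to adapt the inductive Whitney-interpolation and Zariski-equisingularity machinery of \cite{PP} to the projective setting, using the $n+1$ parameters $t\in U\subset\K^{n+1}$ to build enough tangential freedom for stratified submersiveness on every stratum.

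First I would produce the stratification. Starting from any algebraic Whitney stratification of $\proj^n$ compatible with each $V_i$, refine it so that in a semialgebraic neighborhood of every point it admits a Zariski-equisingular presentation along its strata in the sense of \cite{PP}; this is where generic linear projections enter. Next I would fix a reference family: in each affine chart $\K^n\subset\proj^n$ the linear family $(t,x)\mapsto x+\lambda(t)$, for a surjective linear $\lambda:\K^{n+1}\to\K^n$, is a submersive family of diffeomorphisms on the chart, but it does not preserve the strata of $\mathcal S$.

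The core step is the Parusi\'nski--Paunescu construction applied to this initial data: run the Whitney interpolation inductively from the lowest-dimensional stratum upward, at each step using the Zariski-equisingular local product structure to build a correction that preserves the current stratum and matches the already-constructed family on its frontier. The output is a semialgebraic arc-wise analytic homeomorphism $\Phi:U\times\proj^n\to U\times\proj^n$ over $U$ with $\Phi_0=\id$ and $\Phi(U\times S_j)\subset U\times S_j$. Setting $\Phi(t,x)=(t,\Psi(t,x))$ defines $\Psi$. Stratum preservation and the diffeomorphism property for each $\Psi_t$ with small $t$ follow from $\Psi_0=\id$ and the inverse function theorem on each stratum. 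For stratified submersiveness, the correction terms are chosen so that at $t=0$ the differential $D\Psi^x$ on $S_j$ is the composition of $\lambda$ with the equisingular projection onto $T_xS_j$; here the extra parameter beyond $n$ is what keeps this projection surjective after corrections on lower strata. Full submersiveness on $U\times\proj^n$ (not merely on $\{0\}\times\proj^n$) then follows by shrinking $U$, using compactness of $\proj^n$ and openness of the surjectivity condition.

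The main obstacle will be tracking stratified submersiveness through the Whitney interpolation. The \cite{PP} construction is calibrated to yield a homeomorphism with the right regularity, not a priori a family whose $t$-derivative remains surjective on every stratum after all the inductive corrections. Maintaining infinitesimal surjectivity stratum by stratum, while coherently globalizing across affine charts of $\proj^n$ and preserving semialgebraic and arc-wise analytic structure, is the technically delicate point and is what motivates taking the parameter space to have dimension $n+1$ rather than the minimal $n$.
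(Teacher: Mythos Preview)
Your outline diverges from the paper's argument in two structural ways, and the second of these is a genuine gap.

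First, the paper never works chart-by-chart on $\proj^n$ and then glues. Instead it takes homogeneous generators $g_{ij}$ of the ideals of the $V_i$, sets $F_{n+1}=\prod g_{ij}$, and completes this to a \emph{system of homogeneous polynomials} $F_1,\dots,F_{n+1}$ in $\K^{n+1}$ (each $F_{i-1}$ divisible by the discriminant of $(F_i)_{\mathrm{red}}$). The entire deformation $\Psi$ is built globally on $\K^{n+1}$ by induction on the coordinate index~$i$, not on stratum dimension; the associated \emph{canonical stratification} is the one produced by this flag of polynomials. Because each $F_i$ is homogeneous, the resulting $\Psi$ is $\K^*$-equivariant (Proposition~\ref{deformation}(2)) and simply descends to $\proj^n$. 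Your plan to patch semialgebraic arc-wise analytic data across affine charts is exactly the difficulty this device avoids, and it is not clear your gluing would preserve either semialgebraicity or arc-analyticity.

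Second, and more seriously, you misidentify the mechanism for stratified submersiveness. The paper does not begin with a linear reference family $x\mapsto x+\lambda(t)$ and then ``correct'' it; there is no such correction step in \cite{PP}. Rather, the Whitney interpolation function $\psi(z,a,b,\eta)$ of \eqref{psireal} is modified here by inserting an explicit extra parameter $\eta$, and one sets $\eta=t_i$ at the $i$th inductive step. Proposition~\ref{psiproperties}(3) shows $\partial\psi/\partial\eta\ne 0$ precisely away from the roots $a_i$ and from $0$; this is exactly what makes the Jacobian $\partial\Psi/\partial t$ lower-triangular with nonvanishing diagonal on strata of the second type (open in $S'\times\K$), while on strata of the first type (inside $F_i^{-1}(0)$) the map is independent of $t_i$ and lifts the lower-dimensional submersion. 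Your proposed source of surjectivity---``$D\Psi^x$ is $\lambda$ composed with an equisingular projection onto $T_xS_j$''---has no counterpart in the actual construction, and without the $\eta$-term there is no reason the inductive corrections would leave any transverse freedom at all. This is the missing idea, and it also explains why the parameter space has dimension $n+1$: one new scalar $t_i$ is spent at each of the $n+1$ coordinate steps in $\K^{n+1}$.
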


\begin{rem}
A similar result holds for affine varieties; see Corollary \ref{affine}.
\end{rem}

By a \emph{stratification} of a semialgebraic set $X$ we mean a locally finite partition $\mathcal S$ of $X$ into smooth semialgebraic manifolds called the \emph{strata} of $\mathcal S$.
An  \emph{algebraic stratification}  of an algebraic variety $X$ is a stratification coming from a filtration of $X$ by algebraic subvarieties 
$$X=X_n\supset X_{n-1} \supset \cdots \supset X_0 \supset X_{-1} = \emptyset $$
    such that for each $j$, either $\dim X_j = j$ or $X_j = X_{j-1}$, and $\operatorname{Sing}(X_j) \subset X_{j-1}$.  
This filtration induces a decomposition $X=\bigsqcup S_i$, where the $S_i$ are the connected components of all $X_j\setminus  X_{j-1}$.  The sets $S_i$ are the strata of the algebraic stratification $\mathcal S= \{S_i\}$ 
of $X$.   If $\K=\R$ then each stratum $S_j$ is  a nonsingular semialgebraic subset of $X$.  If $\K=\C$ then each stratum $S_j$  is a nonsingular  locally closed subvariety of $X$. (The stratifications we construct in the proof of Theorem \ref{projective}  satisfy the \emph{frontier condition}: If $S_1$ and $S_2$ are disjoint strata such that $S_1$ intersects the closure of $S_2$, then $S_1$ is contained in the closure of $S_2$.)  

Arc-wise analytic trivializations were introduced in \cite{PP}.  In detail the family $\Ps$ of Theorem \ref{projective} 
has the following properties.  It is continuous and semialgebraic and preserves the strata; that is, for each stratum $S$, $\Ps$ induces a map $\Ps_S:U\times S\to S$ (we often skip the subscript $S$ if there is no confusion).  Moreover, the map $\Ph_S(t,x)= (t,\Ps_S(t,x)): U\times S \to U\times S$ 
is a real analytic isomorphism.  The arc-wise analyticity means that $\Ps$ is analytic in $t$ and 
for every real analytic arc $x(s):(-1,1) \to \proj^n$, $(t,s) \mapsto \Ps(t,x(s))$ is analytic ($\K$-analytic in $t$ and real analytic in $s$ ).  In particular, $\Ph(t,x) =(t, \Ps (t,x))  : U\times \proj^ n  \to U\times \proj^n$   
is a semialgebraic homeomorphism that is real analytic on  real analytic arcs.  Finally, we require that 
$\Ph^{-1}$ is also real analytic on  real analytic arcs. 

We give two useful properties of a stratified submersive family of diffeomorphisms. Proposition \ref{generalposition} is a stratified general position theorem, and Proposition \ref{transversality} is a stratified transversality theorem. Although transversality implies general position, we first present a simple direct proof of general position.

The following result is an algebraic version of the piecewise linear general position theorem of \cite{Mc} (\emph{cf}. \cite{GM}, p.\ 142).

\begin{prop}\label{generalposition}
Let $\Ps : U\times \proj^ n  \to \proj^n$ be a stratified family as in Theorem \ref{projective}, and let $\mathcal S$ be the associated algebraic stratification of $\proj^n$.  
 Let $Z$ and $W$ be semialgebraic subsets of $\proj^n$. There is an open dense semialgebraic subset $U'$ of $U$ such that, for all $t \in U'$ and all strata $S\in \mathcal S$,
 $$
\dim   (Z \cap \Ps_t^{-1} (W )\cap S) \le  \dim (Z\cap S)  + \dim (W\cap S)  - \dim S. 
$$
\end{prop}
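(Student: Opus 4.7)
The plan is to work stratum by stratum and then, on each stratum $S$, deduce the inequality from a generic-fiber-dimension argument applied to the evaluation map $(t,z)\mapsto \Ps_t(z)$. Since $\Ps$ preserves each stratum, $\Ps_t^{-1}(W)\cap S=\{x\in S:\Ps_t(x)\in W\cap S\}$, so the bound to be proved only involves $Z\cap S$ and $W\cap S$. Because $\mathcal S$ is a finite algebraic stratification of $\proj^n$, it suffices to produce, for each $S$, an open dense semialgebraic $U'_S\subset U$ on which the inequality holds and then take the finite intersection $U'=\bigcap_S U'_S$.

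Having fixed $S$, I would choose finite semialgebraic stratifications $Z\cap S=\bigsqcup_\alpha Z_\alpha$ and $W\cap S=\bigsqcup_\beta W_\beta$ into smooth connected pieces with $\dim Z_\alpha\le\dim(Z\cap S)$ and $\dim W_\beta\le\dim(W\cap S)$. For each pair $(\alpha,\beta)$, consider the semialgebraic evaluation map
$$
F_\alpha:U\times Z_\alpha\to S,\qquad F_\alpha(t,z)=\Ps_t(z).
$$
The submersive hypothesis says that for every $(t,z)$ the partial derivative in $t$, namely $D\Ps^z(t)$, is surjective onto $T_{\Ps_t(z)}S$, so $F_\alpha$ is itself a submersion. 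Hence $A_{\alpha,\beta}:=F_\alpha^{-1}(W_\beta)$ is either empty or a smooth semialgebraic submanifold of $U\times Z_\alpha$ of dimension $\dim U+\dim Z_\alpha+\dim W_\beta-\dim S$. Applying the semialgebraic generic-fiber theorem to $\pi_{\alpha,\beta}:A_{\alpha,\beta}\to U$ yields an open dense semialgebraic $U'_{\alpha,\beta}\subset U$ (either the complement of $\overline{\pi_{\alpha,\beta}(A_{\alpha,\beta})}$ when that image is not full-dimensional, or the generic set on which the fiber dimension is constantly $\dim A_{\alpha,\beta}-\dim U$) on which
$$
\dim\bigl(Z_\alpha\cap \Ps_t^{-1}(W_\beta)\bigr)=\dim\pi_{\alpha,\beta}^{-1}(t)\le \dim(Z\cap S)+\dim(W\cap S)-\dim S.
$$
Intersecting over the finitely many triples $(S,\alpha,\beta)$ gives the required $U'$; the bound for $(Z\cap S)\cap \Ps_t^{-1}(W\cap S)=\bigcup_{\alpha,\beta}(Z_\alpha\cap \Ps_t^{-1}(W_\beta))$ then follows by taking a maximum.

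The step I expect to be the main obstacle -- or rather, the one substantive use of the hypothesis -- is verifying that $F_\alpha$ is a submersion onto $S$. This is the sole place where the submersive-family property of $\Ps$ really enters; without submersivity in the parameter $t$, $A_{\alpha,\beta}$ need not have the expected dimension and the whole generic-fiber count collapses. Fortunately, this is precisely what Theorem~\ref{projective} is designed to furnish. Everything else reduces to routine semialgebraic dimension theory together with the finiteness of the stratifications involved.
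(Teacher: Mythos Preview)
Your argument is correct and follows essentially the same route as the paper: reduce to a single stratum, use the submersivity of $\Ps$ in the $t$-variable to get a submersion, and then apply a semialgebraic fiber-dimension count to bound $\dim(Z\cap\Ps_t^{-1}(W))$ generically.

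The packaging differs slightly. The paper's lemma avoids stratifying $Z\cap S$ and $W\cap S$ into smooth pieces: instead of your maps $F_\alpha:U\times Z_\alpha\to S$, it considers the single map $\Theta:U\times S\to S\times S$, $\Theta(t,x)=(x,\Ps(t,x))$, observes that $\Theta$ is a submersion, and notes that $\Theta^{-1}(Z\times W)=(U\times Z)\cap\Ps^{-1}(W)$. Since the preimage of an arbitrary semialgebraic set under a submersion has dimension at most $\dim(\text{set})+\dim(\text{fiber})$, one gets the total-space bound $\dim Z+\dim W-\dim S+\dim U$ directly, and then the density of the ``good'' $t$ follows by the same contradiction argument you use. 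This buys a little economy (no auxiliary stratifications of $Z$ and $W$), while your version makes the transversality mechanism behind the dimension count more visible. Either way the substance is identical.
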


This proposition is a consequence of the following lemma. 

\begin{lem}
Let $T$ and $M$ be smooth (equidimensional) semialgebraic subsets of $\proj^n$, with $\dim T \ge  \dim M$. Let $\Ps : T\times M \to M$ be a semialgebraic submersive family of diffeomorphisms. Let $Z$ and $W$ be semialgebraic subsets of $M$. 
There is an open dense semialgebraic subset $T'$ of $T$ such that, for all $t \in T'$,
$$
\dim(Z\cap \Ps_t^{-1} (W ))\le  \dim Z + \dim W - \dim M. 
$$
\end{lem}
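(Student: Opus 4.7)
The plan is to reduce to the case where $Z$ and $W$ are smooth semialgebraic submanifolds of $M$, then realize $Z\cap\Ps_t\inv(W)$ as the $t$-fiber of a natural submersion and finish with a semialgebraic Sard argument. The hypothesis that $\Ps$ is a \emph{submersive} family, not merely a family of diffeomorphisms, is exactly what makes the relevant total map a submersion.

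First I would stratify $Z$ and $W$ into finitely many smooth equidimensional semialgebraic submanifolds $Z=\bigsqcup Z_i$, $W=\bigsqcup W_j$ with $\dim Z_i\le\dim Z$ and $\dim W_j\le\dim W$. Since $Z\cap\Ps_t\inv(W)=\bigcup_{i,j}(Z_i\cap\Ps_t\inv(W_j))$ and semialgebraic dimension is subadditive under finite union, it suffices to prove the bound for each pair $(Z_i,W_j)$ on its own open dense semialgebraic set $T_{ij}\subset T$; the intersection $T'=\bigcap_{i,j}T_{ij}$ remains open dense and works for all $t\in T'$. We may therefore assume $Z$ and $W$ are smooth semialgebraic submanifolds of $M$ of dimensions $z$ and $w$.

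Next, consider the semialgebraic $C^1$ map
\[
G:T\times Z\longrightarrow M,\qquad G(t,\zeta)=\Ps(t,\zeta).
\]
At any $(t,\zeta)$, the differential of $G$ restricted to the $T$-factor equals $D\Ps^\zeta$ at $t$, which is surjective by the submersive family hypothesis; hence $DG_{(t,\zeta)}:T_tT\oplus T_\zeta Z\to T_{\Ps_t(\zeta)}M$ is surjective, so $G$ is a submersion. By the semialgebraic implicit function theorem applied to the smooth semialgebraic submanifold $W\subset M$, the preimage $N=G\inv(W)$ is a smooth semialgebraic submanifold of $T\times Z$ of codimension $\dim M-w$, so
\[
\dim N=\dim T+z+w-\dim M.
\]

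Finally, let $\pi:N\to T$ be the projection. For each $t\in T$ the fiber $\pi\inv(t)$ is (under the obvious identification) precisely $Z\cap\Ps_t\inv(W)$. By the semialgebraic Sard theorem (or, equivalently, by stratifying $\pi$ into pieces of constant rank and noting that the image of the non-submersive part has semialgebraic dimension $<\dim T$), the set of critical values of $\pi$ is semialgebraic of dimension strictly less than $\dim T$; let $T'$ be the complement of its closure, an open dense semialgebraic subset of $T$. For $t\in T'$, either $\pi\inv(t)$ is empty or $t$ is a regular value and $\pi\inv(t)$ is a smooth manifold of dimension $\dim N-\dim T=z+w-\dim M$. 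Either way
\[
\dim\bigl(Z\cap\Ps_t\inv(W)\bigr)\le\dim Z+\dim W-\dim M.
\]
The main technical point is ensuring that Sard's theorem is available at the $C^1$ regularity provided by the statement; the semialgebraic context sidesteps the classical $C^k$ requirement because constant-rank stratification of the semialgebraic map $\pi$ immediately bounds the dimension of the critical image. The condition $\dim T\ge\dim M$ is consistent with, and in fact forced by, the submersivity of $\Ps^x$, and no further use of it is required.
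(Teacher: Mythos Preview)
Your proof is correct, but it is more elaborate than necessary and mirrors the paper's proof of the \emph{next} lemma (transversality) rather than this one. The paper avoids both the reduction to smooth $Z,W$ and the Sard argument. It considers the map $\Theta:T\times M\to M\times M$, $\Theta(t,x)=(x,\Ps(t,x))$; submersivity of the family makes $\Theta$ a submersion, so every nonempty fiber has dimension $\dim T-\dim M$, and hence $\dim\Theta^{-1}(Z\times W)\le\dim Z+\dim W+\dim T-\dim M$ with no smoothness assumption on $Z$ or $W$. Since $\Theta^{-1}(Z\times W)=(T\times Z)\cap\Ps^{-1}(W)$ and its fiber over $t$ is exactly $Z\cap\Ps_t^{-1}(W)$, the set of $t$ where the fiber dimension exceeds $\dim Z+\dim W-\dim M$ can contain no open subset of $T$ (else the total dimension would be too large); its complement is dense semialgebraic, and one takes $T'$ to be its interior. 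Your route through stratification and Sard works, but the paper's pure dimension count is shorter and sidesteps any regularity discussion.
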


\begin{proof} 
Consider the map $\Theta:T\times M\to M\times M$, $\Theta(t,x) = (x,\Ps (t,x))$.
Since $\Ps$ is a submersive family,  $\Theta$ is a submersion.  Therefore for all $(x,y)\in M\times M$, if $\Theta^{-1}(x,y)\neq \emptyset$ then $\dim \Theta^{-1}(x,y) = \dim T - \dim M$.  Thus, because
$(T\times Z  )\cap \Ps^{-1}(W ) = \Theta^{-1} (Z\times  W)$, we have 
$$
\dim((T\times Z  ) \cap \Ps^{-1}(W ) ) \le  \dim Z  + \dim W - \dim M + \dim T.  
$$
Let $Q = \{t \in T\ ;\ \dim Z\cap \Ps_t^{-1} (W)  \le  \dim Z + \dim W - \dim M \}$. 
If $Q$ is not dense, there exists an open subset $\Omega$ of $T$ such that, for all $t \in \Omega$,
$$ 
\dim (Z\cap \Ps _t ^{-1}(W )) > \dim Z + \dim W - \dim M.
$$
Thus
$$ 
\dim ((T\times Z )\cap \Ps  ^{-1}(W))   > \dim Z + \dim W - \dim M + \dim T,
$$
which is a contradiction.  The set $Q$ is semialgebraic and dense, so we can take $T' = \Int Q$.
\end{proof}

The following result is an algebraic version of the transversality theorem of \cite{MTdP} (Theorem 3.8, p.\ 4887). If $\mathcal S$ is a stratification of a semialgebraic set $X$, and $\mathcal T$ is a stratification of a semialgebraic subset $Y$ of $X$, then $(Y,\mathcal T)$ is a \emph{substratified object} of $(X,\mathcal S)$ if each stratum of $\mathcal T$ is contained in a  stratum of $\mathcal S$. Two substratified objects $(Z,\mathcal A)$ and $(W,\mathcal B)$ of $(X,\mathcal S)$ are   \emph{transverse} in $(X,\mathcal S)$ if, for every pair of strata $A\in\mathcal A$ and $B\in\mathcal B$ such that $A$ and $B$ are contained in the same stratum $S\in\mathcal S$, the manifolds $A$ and $B$ are transverse in $S$.

\begin{prop}\label{transversality}
Let $\Ps : U\times \proj^ n  \to \proj^n$ be a stratified family as in Theorem \ref{projective}, and let $\mathcal S$ be the associated algebraic stratification of $\proj^n$.  
 Let $Z$ and $W$ be semialgebraic subsets of $\proj^n$, with  semialgebraic stratifications $\mathcal A$ of $Z$ and $\mathcal B$ of $W$ such that $(Z,\mathcal A)$ and $(W,\mathcal B)$ are substratified objects of $(\proj^n,\mathcal S)$. There is an open dense semialgebraic subset $U'$ of $U$ such that, for all $t \in U'$, $(Z,\mathcal A)$ is transverse to $\Ps_t^{-1} (W,\mathcal B)$ in $(\proj^n,\mathcal S)$,
\end{prop}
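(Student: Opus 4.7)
The plan is to reduce, via parametric transversality, to one pair of strata at a time, and then intersect finitely many open dense sets. Fix a stratum $S \in \mathcal S$ and strata $A \in \mathcal A$, $B \in \mathcal B$ with $A \subset S$ and $B \subset S$. Because $\Psi$ is a stratified submersive family of diffeomorphisms, it restricts to a semialgebraic submersive family $\Psi_S : U \times S \to S$, so for every $(t,x) \in U \times S$ the differential at $t$ of $\Psi_S^x$ maps $T_t U$ onto $T_{\Psi_t(x)} S$. In particular, the restriction $F := \Psi_S|_{U \times A} : U \times A \to S$ is a submersion, since differentiation in the $U$-direction alone is already surjective onto $TS$. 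Hence $N := F^{-1}(B) \subset U \times A$ is a smooth semialgebraic submanifold of codimension $\dim S - \dim B$.

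Let $\pi : N \to U$ be the projection onto the first factor. A direct linear-algebra computation shows that $t$ is a regular value of $\pi$ if and only if $\Psi_t|_A : A \to S$ is transverse to $B$ in $S$; since $\Psi_t$ is a diffeomorphism of $S$, this is equivalent to $A$ being transverse to $\Psi_t^{-1}(B)$ in $S$. By the semialgebraic version of Sard's theorem applied to the semialgebraic map $\pi$, the set $\Sigma_{A,B} \subset U$ of critical values of $\pi$ is semialgebraic of dimension strictly less than $\dim U$; therefore $U'_{A,B} := \Int(U \setminus \Sigma_{A,B})$ is an open dense semialgebraic subset of $U$. The stratifications $\mathcal S$, $\mathcal A$, $\mathcal B$ are finite (each is a semialgebraic partition of a semialgebraic subset of the compact space $\proj^n$), so the collection of admissible triples $(S,A,B)$ with $A, B \subset S$ is finite, and $U' := \bigcap U'_{A,B}$ is the required open dense semialgebraic set.

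The argument is the usual parametric transversality trick, but two features of the present setup deserve emphasis. First, the submersivity of $\Psi$ is used essentially at the outset: without it the restriction $F$ might fail to be a submersion, and parametric transversality could break down for generic $t$. Second, ordinary Sard yields only a set of full measure, whereas the conclusion demands an open dense \emph{semialgebraic} set of parameters; this is supplied by the semialgebraic Sard theorem, which applies because $\Psi$, and hence $\pi$, is semialgebraic. Aside from these ingredients, the proof is bookkeeping over finitely many pairs of strata, so the main obstacle is really just verifying that each of these two ingredients is available in our category.
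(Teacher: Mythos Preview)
Your argument is correct and follows essentially the same route as the paper. The paper isolates the single-pair case as a lemma (with $T=U$, $M=S$), uses the map $\Theta(t,x)=(x,\Psi(t,x))$ to exhibit $(T\times A)\cap\Psi^{-1}(B)$ as a smooth submanifold, and then applies Sard to the projection; your map $F=\Psi|_{U\times A}$ cuts out the same set, and your invocation of semialgebraic Sard plays the same role as the paper's stratify-then-Sard step, after which both arguments take the interior of the good set and intersect over the finitely many admissible triples.
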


This proposition is a consequence of the following lemma. 

\begin{lem}Let $T$ and $M$ be smooth (equidimensional) semialgebraic sets, with $\dim T \ge  \dim M$. Let $\Ps : T\times M \to M$ be a semialgebraic submersive family of diffeomorphisms. Let $A$ and $B$ be smooth semialgebraic subsets of $M$. 
There is an open dense semialgebraic subset $T'$ of $T$ such that, for all $t \in T'$, $A$ is transverse to $\Ps_t^{-1} (B )$.
\end{lem}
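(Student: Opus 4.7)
The plan is the classical parametric transversality argument, carried out in the semialgebraic setting. First I would consider the restriction $G := \Ps|_{T\times A} : T\times A \to M$. Because $\Ps$ is a submersive family, the partial differential $D\Ps^x$ at $t$ is already surjective onto $T_{\Ps(t,x)}M$ for every $(t,x)\in T\times A$, so $G$ itself is a submersion. In particular $G$ is transverse to the smooth semialgebraic subset $B\subset M$, and $W := G^{-1}(B)$ is a smooth semialgebraic submanifold of $T\times A$ of codimension $\dim M-\dim B$.

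Next I would consider the semialgebraic projection $\pi: W \to T$, $\pi(t,x)=t$, and establish the equivalence
\begin{equation*}
t \text{ is a regular value of } \pi
\ \Longleftrightarrow\
A \text{ is transverse to } \Ps_t^{-1}(B) \text{ in } M.
\end{equation*}
For this, note that a tangent vector $(v,w)\in T_tT\times T_xA$ lies in $T_{(t,x)}W$ iff $D\Ps^x(v)+D\Ps_t(w)\in T_{\Ps(t,x)}B$. Surjectivity of the differential of $\pi$ at $(t,x)$ amounts to demanding that every $v\in T_tT$ admit such a $w\in T_xA$; since $D\Ps^x$ already surjects onto $T_{\Ps(t,x)}M$, this is equivalent to $D\Ps_t(T_xA)+T_{\Ps(t,x)}B = T_{\Ps(t,x)}M$. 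Since $\Ps_t$ is a diffeomorphism and $(D\Ps_t)^{-1}(T_{\Ps(t,x)}B)=T_x(\Ps_t^{-1}(B))$, pulling back by $(D\Ps_t)^{-1}$ rewrites this as $T_xA+T_x(\Ps_t^{-1}(B)) = T_xM$, which is exactly the transversality of $A$ and $\Ps_t^{-1}(B)$ at $x$.

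To finish, I would apply the semialgebraic Sard theorem to $\pi$: the set $\Sigma\subset T$ of critical values of $\pi$ is semialgebraic of dimension strictly less than $\dim T$, hence so is its closure $\overline{\Sigma}$. Taking $T' := T\setminus\overline{\Sigma}$ yields an open dense semialgebraic subset of $T$, and by the equivalence above, for every $t\in T'$ the manifold $A$ is transverse to $\Ps_t^{-1}(B)$ in $M$. The main obstacle is the differential identification in the second paragraph: it is the only point at which both the submersive-family hypothesis and the fact that each $\Ps_t$ is a diffeomorphism are used, and one must set up the differentials carefully. The remaining steps are a routine application of Sard's theorem in the semialgebraic category.
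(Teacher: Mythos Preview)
Your argument is correct and follows essentially the same parametric transversality approach as the paper: the paper uses the submersion $\Theta(t,x)=(x,\Ps(t,x))$ to exhibit $(T\times A)\cap\Ps^{-1}(B)$ as a smooth submanifold, whereas you use the restricted map $G=\Ps|_{T\times A}$, but this is the same set and the same idea. The identification of critical values of the projection with non-transverse parameters, followed by Sard's theorem in the semialgebraic category and passage to the interior of the good set, matches the paper's proof line for line.
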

\begin{proof} We apply a standard transversality technique (\emph{cf.}\ \cite{SMT} Theorem I.1.3.6, p.\ 39). Since $\Ps$ is a submersive family, the map $\Theta:T\times M\to M\times M$, $\Theta(t,x) = (x,\Ps (t,x))$, is a submersion. So $\Theta^{-1}(A\times B) = (T\times A)\cap \Ps^{-1}(B)$ is a smooth submanifold of $T\times M$. Let $\pi: T\times M\to T$ be the projection, and let
$$
P = \{ t\in T\ ;\ t\ \text{is a critical value of}\ \pi | (T\times A)\cap\Ps^{-1}(B) \}.
$$
Now $t\in P$ if and only if there exists $x\in A$ such that $(t,x)\in  (T\times A)\cap\Ps^{-1}(B)$ and
$$
\bold T_{(t,x)}(\{t\}\times A)+\bold T_{(t,x)}((T\times A)\cap\Ps^{-1}(B))  \neq \bold T_{(t,x)}(T\times A),
$$
where $\bold T$ denotes the tangent space. This is equivalent to
$$
\bold T_{(t,x)}(\{t\}\times A)+\bold T_{(t,x)}(\Ps^{-1}(B)) \neq \bold T_{(t,x)}(T\times M);
$$
in other words, $\{t\}\times A$ is not transverse to $\Ps^{-1}(B)$. 
This is equivalent to the condition that $\Ps_t: A\to M$ is not transverse to $B$; \emph{i.e.}\  $A$ is not transverse to $\Ps_t^{-1}(B)$.

By Sard's theorem, $P$ has measure zero in $T$. (Stratify the semialgebraic map $\pi: (T\times A)\cap\Ps^{-1}(B)\to T$ so that the restriction of $\pi$ to each stratum is real analytic. Then apply Sard's theorem to these restrictions.) Therefore $Q = T\setminus P$ is dense in $T$. Since $Q$ is semialgebraic, we can let $T'$ be the interior of $Q$.
\end{proof}

\begin{rem}
In the application of our general position and transversality theorems to intersection theory for singular varieties, we do not encounter the problems described by Murolo \emph{et al.} (\cite{Mu} \S5; \cite {MTdP} p.4892),
for the simple reason that the intersection of two semialgebraic sets is semialgebraic.
\end{rem}


\section{Whitney interpolation}

In this section we adapt the Whitney Interpolation of \cite{PP} 
 and define the interpolation functions by modifying the function defined  in Example A.7 of \cite{PP} by adding a parameter $\eta\in \C$.  

 Given two subsets $\{a_1, \ldots, a_N\}\subset \C$, $\{b_1, \ldots, b_N\}\subset \C$,   
 such  that if $a_i=a_j$ then $b_i=b_j$ and if $a_i=0$ then $b_i=0$,    we will define a 
Lipschitz  interpolation function $\ps : \C\to \C$, depending continuously on $a,b$ and a 
 parameter $\eta\in \C$,  such that $\ps (a_i)=b_i$ and $\ps (0) =0$.  Moreover, if we set 
\begin{align}
\gamma = \max_{a_i\ne a_j} \frac { |D_i - D_j|}{|a_i -a_j|}, 
\end{align}
where  $ D_i = b_i - a_i$, then we will show that  $\ps $ is a bi-Lipschitz homeomorphism provided $\gamma$
and $\eta$ are sufficiently small.  

Denote by $\sigma_i = \sigma_i (\xi_1, \ldots ,\xi_N)$ 
the elementary symmetric functions in  $\xi_1, \ldots ,\xi_N \in \C$.    
Let  $P_k  =  \sigma^{\alpha_k}_k  $, where $\alpha_k =  (N!)/k$, so that all $P_k$ are homogeneous of the same degree.  Then we define 
\begin{align}\label{fi3}
\f_j (\xi) =  \frac 1 {N!}  \sum _k  \xi_j  \frac {\partial  P_k}{\partial \xi_j} (\xi )  \overline {P_k(\xi )} ,  
\end{align} 
where $\xi= (\xi_1, \ldots ,\xi_N )$ and the bar denotes  complex conjugation, and 
\begin{align}\label{fi4}
\f (\xi) =\sum f_j(\xi)=  \sum _k  P_k(\xi)  \overline {P_k (\xi)} .  
\end{align}
Note that $f$ is real-valued,  $\R$-homogeneous of degree $d=2N!$, and vanishes only at the origin. 
Let 
$$\mu_i(z) :=\f _i((z-a_1)^{-1} , \ldots, (z-a_N)^{-1} )  , \quad \mu (z) 
:=\f ((z-a_1)^{-1} , \ldots, (z-a_N)^{-1} )  .$$
Define the  interpolation map $\ps: \C \to \C$ (parametrized by $a,b$ and $\eta\in \C$) by 
the formula 
\begin{align}\label{psireal} 
\ps(z) =\ps (z,a,b, \eta) = z + \frac {\sum_{i=1}^N  \mu_i(z)  (b_i-a_i) + \eta z |z|^{- 2N!}}
 {\mu(z) +  |z|^{- 2N!}} 
\end{align}
and set formally $\ps (a_i) = b_i$, $\ps (0)=0$ (as we show later, this is 
the extension by continuity).   Note that 
\begin{align}\label{lambda} 
\ps(\lambda z, \lambda a, \lambda b, \eta) = \lambda \ps (z,a,b, \eta), \quad \text{ for all } \lambda\in \C .
\end{align}
We may consider  $\ps$ as a function of variables $z,a,b,\eta$ defined on 
\begin{align*}
\Xi=\{(z,a,b, \eta)\in \C\times \C^N \times \C^N \times \C\ ;\ \text { if } a_i=a_j \text { then } 
b_i=b_j , \text {  if } a_i=0\text { then } 
b_i=0 \}.  
\end{align*}

\begin{prop}\label{psiproperties}
The function $\ps$ satisfies: 
\begin{enumerate}
\item
$\ps$ is continuous  on $\Xi$.  
\item
There is a universal constant $C=C(N)$ such that  the map $\ps: \C\to \C$ is Lipschitz with Lipschitz constant $1+ C(\gamma +|\eta |)$.  If $C(\gamma +|\eta |)< 1$ then $\ps: \C\to \C$ is a bi-Lipschitz homeomorphism,   with $(1-C(\gamma +|\eta |))^{-1}$ as  a Lipschitz constant of $\ps^{-1}$.    
\item
$\partial \ps/\partial \eta (z,a,b,\eta) = 0 $ if and only if $z=a_i$ or $z=0$. 
\end{enumerate}
\end{prop}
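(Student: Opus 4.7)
The plan is to parallel the argument for Example A.7 of \cite{PP}, treating the $\eta$-dependent summands in (\ref{psireal}) as perturbations of the $\eta=0$ case already analyzed there. Write $\psi(z) = z + N(z)/D(z)$ where $N(z) = \sum_i \mu_i(z)(b_i-a_i) + \eta z|z|^{-2N!}$ and $D(z) = \mu(z) + |z|^{-2N!}$; both split naturally as a $(b-a)$-piece handled in \cite{PP} plus the new $\eta$- and $|z|^{-2N!}$-pieces.

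For continuity (1), on the open subset of $\Xi$ where $z \notin \{0, a_1, \ldots, a_N\}$ the functions $\mu_i, \mu$, and $|z|^{-2N!}$ are jointly continuous in $(z, a, b, \eta)$ and $D>0$, so the ratio is continuous. At the assigned values I check limits. As $z \to a_i$, the $\R$-homogeneity of $f$ and $f_j$ of degree $2N!$ makes the terms of $\mu_j(z)$ and $\mu(z)$ containing $\xi_j=(z-a_j)^{-1}$ with $a_j=a_i$ blow up like $|z-a_i|^{-2N!}$, dominating both $|z|^{-2N!}$ and $\eta z|z|^{-2N!}$; the same ratio computation as in \cite{PP} then gives $N/D \to b_i - a_i$, so $\psi(z)\to b_i$. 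As $z \to 0$ (and no $a_j=0$), $\mu$ is bounded while $|z|^{-2N!}$ dominates $D$; dividing numerator and denominator by $|z|^{-2N!}$ gives $N/D \to \eta\cdot 0 = 0$ thanks to the extra factor of $z$ in the $\eta$-term. When some $a_j=0$, the consistency assumption on $\Xi$ forces $b_j = a_j = 0$, and an order-of-growth check shows $N/D = O(|z|) \to 0$.

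For the Lipschitz estimate (2), the $\eta=0$ part of $\psi-\mathrm{id}$ has Lipschitz constant $C(N)\gamma$ exactly as in \cite{PP}. The new term to control is
$$E(z,\eta) = \frac{\eta z|z|^{-2N!}}{\mu(z)+|z|^{-2N!}} = \frac{\eta z}{|z|^{2N!}\mu(z)+1}.$$
The denominator $|z|^{2N!}\mu(z)+1 \ge 1$ by the sum-of-squares form (\ref{fi4}), and the $\R$-homogeneity of $f$ controls its derivative uniformly in the $a_j$, yielding $|E(z,\eta)-E(w,\eta)| \le C(N)|\eta||z-w|$. Combined with the $\eta=0$ estimate this gives $|\psi(z)-\psi(w)-(z-w)| \le C(N)(\gamma+|\eta|)|z-w|$ on the complement of the singular set, hence on all of $\C$ by (1). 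When $C(\gamma+|\eta|)<1$, the reverse triangle inequality gives the lower bound $|\psi(z)-\psi(w)|\ge (1-C(\gamma+|\eta|))|z-w|$, yielding injectivity and the Lipschitz bound on $\psi^{-1}$; properness from the same lower bound as $|z|\to\infty$, together with Brouwer invariance of domain on $\C$, then gives surjectivity, so $\psi$ is a bi-Lipschitz homeomorphism of $\C$.

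For (3), on the open set where $z\notin\{0,a_1,\ldots,a_N\}$, differentiating (\ref{psireal}) gives
$$\frac{\partial \psi}{\partial \eta}(z,a,b,\eta) = \frac{z|z|^{-2N!}}{\mu(z)+|z|^{-2N!}},$$
which vanishes iff $z=0$; since $z=0$ is excluded on this set, the partial is nonzero there. At the assigned points $z=a_i$ and $z=0$, $\psi$ equals $b_i$ or $0$, independent of $\eta$, so the $\eta$-partial vanishes, giving the stated equivalence. The main obstacle is the quantitative Lipschitz bound in (2) and specifically verifying that the new term $E(z,\eta)$ does not worsen the $N$-universal constant, and that the modified denominator $D=\mu+|z|^{-2N!}$ continues to control the rational function uniformly near each $z=a_i$; parts (1) and (3) are direct limit and differentiation calculations.
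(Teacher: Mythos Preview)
Your proof is correct and follows essentially the same approach as the paper: the continuity argument via the rewritten formula $\psi(z)=z+\dfrac{\sum_i\mu_i(z)(b_i-a_i)|z|^d+\eta z}{|z|^d\mu(z)+1}$, the Lipschitz bound via the derivative estimates $|\mu_i'|\le B|z-a_i|^{-1}\mu$ and $|\mu'|\le D\mu^{(d+1)/d}$ from \cite{PP}, and the direct computation $\partial\psi/\partial\eta=z/(|z|^d\mu(z)+1)$ for (3). The only organizational difference is that you split the Lipschitz estimate into the $\eta=0$ piece (quoted from \cite{PP}) plus the new term $E(z,\eta)$, whereas the paper differentiates the full expression at once; both yield the same bound $|(\psi(z)-z)'|\le C(N)(\gamma+|\eta|)$.
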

 
\begin{proof}
Sometimes it will be convenient to use  the following formula for $\ps$:
\begin{align}\label{2ndformulaforpsi}
\ps (z)= z + \frac {\sum_{i=1}^N  \mu_i(z)  (b_i-a_i) |z|^d+ \eta z }
 {\mu(z) |z|^d+  1}, \quad d=2N!.
 \end{align}

We begin the proof of (1) by showing that $\ps (z,a,b,\eta) \to 0=\ps (0, a_0, b_0, \eta_0)$ if $(z,a,b,\eta) \to (0, a_0, b_0, \eta_0)$ (later we suppress  $a,b,\eta$ in the notation when it causes no confusion).   Since the denominator of the fraction in \eqref{2ndformulaforpsi} is always real positive $\ge 1$, $\frac { \eta z }  {\mu(z) |z|^d+  1} \to 0$ as $z\to 0$.   
If $a_{0i} \ne 0$ then  $ \mu_i(z)  (b_i-a_i) |z|^d\to 0$ and hence $\frac { \mu_i(z)  (b_i-a_i) |z|^d}  {\mu(z) |z|^d+  1} \to 0$ as $z\to 0$.  Finally consider the 
case $a_{0i} = 0$.  Then $b_{0i}=0$ and hence $b_i-a_i \to 0$.  Since in general 
$|\mu_i|\le  C_1 \mu$ for a universal constant $C_1$, by conditions (3) and (5) of Appendix A of \cite{PP}, 
we have $\frac { \mu_i(z)  (b_i-a_i) |z|^d}  {\mu(z) |z|^d+  1} \to 0$ as $z\to 0$ also in this case.  
 
The remaining part of the  proof of (1) is similar to the proof of Proposition A.4 of \cite{PP}.  
Let $(z,a,b,\eta) \to (z_0, a_0, b_0, \eta_0)$, $z_0\ne 0 $.   We suppose that $z_0$ equals one of the $a_{0i}$;  otherwise the proof is easy.   
Thus let $z_0=a_{01} \ne 0 $.  Then $\ps (z_0,a_0,b_0, \eta)  = b_{01}$ independently of $\eta$.  
Denote   $J=\{j\in \{1, \dots ,n\}\ ;\ a_{0j}= a_{01}\}$.    Then 
\begin{align*}
 \ps (z,a,b, \eta)  - \ps (z_0,a_0,b_0, \eta_0 )  & = (z - z_0)+  
\frac { \sum_{i}  \mu_i (z,a) (b_i  -a_i)+ \eta z|z|^{-d} } { \mu (z,a) +  |z|^{- d}} - (b_{01}-a_{01}) \\
& = (z - z_0)+  
\frac { \sum_{i\in J}  \mu_i (z,a) ((b_i -b_{01}) -(a_i-a_{01}))} { \mu (z,a) +  |z|^{- d}} \\
& + \frac { \sum_{i\notin J} \mu_i (z,a) ((b_{i}-b_{01}) -(a_{i}- a_{01}))} { \mu (z,a) +  |z|^{- d}}    + \frac {  \eta z- (b_{01 }- a_{01})  } { \mu (z,a)  |z|^{d}+1 }  . 
\end{align*}
The first three summands converge to $0$ as $(z,a,b) \to (z_0, a_0, b_0)$ by the arguments given in    \cite{PP}, and the last one because $\mu (z,a) \to \infty$.  

The proof of (2) is similar to that of Proposition A.3 of \cite{PP}.  
It relies on the formula (A.7)  of  Appendix A of \cite{PP}, which says that there are universal constants $A, B, D$ such that 
\begin{align}\label{boundonmu}  \notag
& |\mu_i(z)|\le  A  |z-a_i|^{-1}  \mu (z) ^{1- \frac {1} d} ,  \\
& |\mu_i '(z)| \le B  |z-a_i|^{-1}  \mu (z),  \\ \notag
&|\mu '(z)| \le D \mu (z)^{\frac {d+1} d}, 
\end{align}
where ``prime'' denotes any directional derivative (in the variable $z$) $\frac {\partial } {\partial v} $, $v\in \C$, $|v|=1$. 

Given $z\in \C$, choose $j$ such that $|z-a_j| = \min _i |z-a_i|$.  Then, for all $i$,
\begin{align}\label{boundfordifference}
|a_i-a_j|\le 2 |z-a_i|.
\end{align}
Denote $I_j = \{i; a_i=a_j\}$.   By differentiating,
\begin{align} \notag
 | (\ps (z) - z)'| \le & \frac {\sum_{i\notin I_j} |\mu'_i (z) ( D_i- D_j)| |z|^d+ \sum_{i\notin I_j} |\mu_i (z) ( D_i- D_j)| (|z|^d)' + |\eta|} {|z|^d \mu (z)+1} \\ \notag
 +  & \frac {(\sum_{i\notin I_j} |\mu_i (z) ( D_i- D_j)| |z|^d + |\eta z|) (|z|^d ||\mu' (z)|+ (|z|^d)'|\mu|)} {( |z|^d\mu (z)+1)^2} .  
\end{align}
We have 
\begin{align}\label{Dbound}
|D_i - D_j| \le \gamma |a_i -a_j|\leq 2\gamma |z-a_i|.  
\end{align}

Using \eqref{Dbound} 
and  \eqref{boundonmu} we get
\begin{align*}
 |\mu'_i (z) ( D_i- D_j)| \le 2 B \gamma \mu (z) 
\end{align*}
and 
 \begin{align*} 
|\mu_i (z) ( D_i- D_j)|  |\mu' (z)| \le 2 AD \gamma( \mu (z))^2 .  
\end{align*}
Moreover  $|(|z|^d)'| \mu ^{1- \frac{1}{d} }(z) \leq d(|z|^d \mu  (z) +1)$.  Indeed, this follows from the fact that $a^{d-1} \le (a^d+1)$ for all $a\ge 0$ applied to $a=|z| \mu^{1/d}  (z)$.   All this shows that 
$$\frac {\sum_{i\notin I_j} |\mu'_i (z) ( D_i- D_j)| |z|^d+ \sum_{i\notin I_j} |\mu_i (z) ( D_i- D_j)| (|z|^d)' + |\eta|} {|z|^d \mu (z)+1} \leq (2NB+2dNA)\gamma+|\eta|.
$$
Similarly we obtain 
$$ \frac {(\sum_{i\notin I_j} |\mu_i (z) ( D_i- D_j)| |z|^d + |\eta z|) (|z|^d |\mu' (z)|+ (|z|^d)'\mu)} {( |z|^d\mu (z)+1)^2}  \leq  (2dNA+2NAD)\gamma+d|\eta|+D|\eta|   .  $$
This finally shows that 
$$ | (\ps (z) - z)'|  \leq C(\gamma +|\eta|),$$
 for a universal constant $C=C(N)$, i.e.  $ z \to \ps (z) - z$ is a Lipschitz contraction with Lipschitz constant $C$.  This shows (2); see  the proof of Proposition A3 of \cite{PP} 
for details. 

The claim (3) follows from the fact that 
\begin{align}
\frac {\partial \ps}   {\partial \eta} = \frac {z} {|z|^d \mu (z)+1} 
\end{align}
vanishes if $z=0$ and if $z=a_i$.  In the latter case the denominator equals infinity.  
\end{proof}

Now we show the arc-wise analyticity of $\ps$. In the applications (see the next section), 
the entries of the vectors $a$ and $b$ are the roots of a polynomial.  Note that 
$\ps$ is symmetric in $a$ and $b$ simultaneously; that is, $\ps$ is invariant if we apply the same 
permutation to the entries of $a$ and of $b$.  

The space $\C^N\ni a$ can be stratified by the \emph{type} of $a=\{a_1,\dots,a_N\}$; that is, by the number of distinct $a_i$ and the multiplicities with which they appear in the vector $a$. These multiplicities $m_s$, $s=0, \dots , k$  are defined as follows.  Let $W_0 = \{i\ ;\ a_i = 0 \}$. Define an equivalence relation on the set $\{1,\dots, N\}\setminus W_0$ by $i\sim j$ if $a_i = a_j$. Let $W_1,\dots,W_k$ be the collection of equivalence classes, a partition of the set $\{1,\dots, N\}\setminus W_0$. Let $W = \{W_0,W_1,\dots,W_k\}$, and let $m_s = |W_s|$. By definition $m_0\geq 0$ and $m_s>0$ for $s>0$.    We encode such a type by the multiplicity vector   
 $\mathbf m= (m_0, m_1, \dots , m_k)$, with $\sum _{s=0}^k m_i=N$. We choose the index set so that $m_1\le \dots \le m_k$.   

We denote by  $S_{\mathbf m}\subset \C^N$ the  set of $a$ with  multiplicity vector $\mathbf m$.  The strata of $\C^N$ are the connected components of the sets $S_{\mathbf m}$. Two vectors $a, a'\in S_{\mathbf m}$ are in the same stratum if the associated sets $W_0$ and $W_0'$ are equal and the associated partitions $\{W_1,\dots,W_k\}$ and $\{W_1',\dots,W'_{k'}\}$ are equal.

Note that the stratification by the type of $a$ can be defined by the signs of symmetric  polynomials 
in the entries of $a$.  
More precisely, for a multiplicity vector   $\mathbf m=(m_0,\dots,m_k)$, a connected component of 
$S_{\mathbf m}$ is a connected component of the set $T\supset S_{\mathbf m}$ defined as follows. Let $D_j$ be the classical generalized discriminants; see  \cite{whitneybook} Appendix IV or  \cite {PP} Appendix B. Let $l$ be the number of nonzero entries of $\mathbf m$. If $m_0 > 0$ then $l=k+1$ and $T$ is given by 
\begin{equation}\label{disc}
D_{l+1} = \dots = D_N=0,\ D_l\ne 0
\end{equation}
and $\prod a_i=0$.  
If $m_0 = 0$ then $l=k$ and $T$ is given by the condition (\ref{disc}) and $\prod a_i\neq 0$.

\begin{prop}\label{psiproperties2}
The function $\ps$ satisfies:  
\begin{enumerate}
\item  [(4)] 
$\ps (z,a,b, \eta )$ is a polynomial of degree $1$ in $b$ and $\eta$,  and for every stratum $S$ of the stratification  by the type of $a$,  $\ps$ is  real analytic on $ \C \times S \times S\times \C $.  
\item [(5)] 
   Let  an unordered set of function germs $b(t,s)=(b_1(t,s), \dots , b_{N}(t,s))$, $(t,s) \in  (\K^m\times \R ,(0,0))$, be such that  for every  symmetric polynomial 
   $G$  in $b$,  $G (b(t,s))$ is analytic  in $(t,s)$   (it equals a power series in $(t,s)\in \K^m\times \R$).  We also assume that the type of $b(t,s)$ is independent of $t$ and that the last not identically equal to zero generalized discriminant 
 $D_l (b_1(t,s),\dots , b_{N}(t,s))$ is of the form $s^k u(t,s)$, $k\ge 0$, $u(0,0)\ne 0$, and 
 we make a similar assumption on the product of $b_i$.    \\
Let  $z(s): (\R,0) \to \C$  be  a real analytic germ and  set $a(s) =   b(0,s)$.  Then $\ps (z(s),a(s),b(t,s), \eta)$  is analytic in $(t,s, \eta)$.   
\end{enumerate}
\end{prop}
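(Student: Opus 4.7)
For part (4), the degree $1$ dependence of $\ps$ on $b$ and on $\eta$ is immediate from \eqref{2ndformulaforpsi}: the numerator $\sum_i \mu_i(z)(b_i - a_i)|z|^d + \eta z$ is affine in $b$ and linear in $\eta$, while the denominator $\mu(z)|z|^d + 1$ depends only on $(z, a)$. For real analyticity on $\C \times S \times S \times \C$, I would fix a stratum $S$ of multiplicity type $\mathbf m = (m_0, \dots, m_k)$ and reparametrize by the distinct cluster centers $a_{1,*}, \dots, a_{k,*}$ (and $0$ if $m_0 > 0$); on $S \times S$ the $b$'s are automatically clustered the same way, so they are parametrized by $b_{s,*}$. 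Away from the exceptional set $\{z \in \{0, a_{1,*}, \dots, a_{k,*}\}\}$ every ingredient is manifestly real analytic and $\mu(z)|z|^d + 1 \ge 1$. To extend across the exceptional set, I would multiply numerator and denominator of \eqref{2ndformulaforpsi} by $|z|^{2 m_0 N!}\prod_s |z - a_{s,*}|^{2 m_s N!}$; because every $P_k$ is homogeneous of the same degree $N!$, this clears all denominators and yields a real polynomial quotient in $(z, \bar z, a, \bar a, b, \bar b, \eta)$. A local computation near each cluster center then shows that numerator and denominator vanish to matching bi-degrees, giving a real analytic extension consistent with the continuity-values $b_{s,*}$ (and $0$) furnished by Proposition \ref{psiproperties}(1).

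For part (5), I would follow the arc-wise analyticity scheme of Appendix A of \cite{PP}, adapted to the new parameter $\eta$. The hypotheses on the type of $b(t, s)$ and the finite-order vanishing $D_l(b(t, s)) = s^k u(t, s)$ with $u(0, 0) \neq 0$ (and similarly for $\prod b_i$) imply that for $(t, s)$ near $(0, 0)$ with $s \ne 0$, the vector $b(t, s)$ lies in a single stratum $S_{\mathbf m}$. Since the elementary symmetric polynomials of $b(t, s)$ are analytic in $(t, s)$ and $D_l(b(t, s)) \ne 0$ for $s \ne 0$, I would select analytic cluster centers $b_{s,*}(t, s)$ on $\{s \ne 0\}$ as simple roots of the reduced characteristic polynomial via the implicit function theorem, and extend them continuously through $s = 0$. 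Applying part (4) yields real analyticity of $\ps(z(s), a(s), b(t, s), \eta)$ in $(t, s, \eta)$ on $\{s \ne 0\}$, and continuity across $s = 0$ is furnished by Proposition \ref{psiproperties}(1). Analyticity across $s = 0$ then follows from the arc-wise analytic framework of \cite{PP}: clearing the factor $|z(s)|^{2 m_0 N!}\prod_s |z(s) - b_{s,*}(t, s)|^{2 m_s N!}$ converts the formula into a ratio of analytic functions in $(t, s, \eta)$ whose denominator does not vanish on a full neighborhood of the origin.

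The main obstacle is the analytic extension across $s = 0$ in part (5), where the type of $b(t, s)$ may genuinely degenerate because $D_l$ vanishes. The controlled finite order $k$ of this vanishing is precisely what enables the analytic selection of cluster centers and the subsequent removal of the apparent singularity. The new parameter $\eta$ introduces no essential complication: the extra summand $\eta z / (\mu(z)|z|^d + 1)$ has denominator bounded below by $1$ and numerator linear in $\eta$, so analyticity along the arcs for this term follows by exactly the same cancellation used for the $b$-dependent part of \eqref{2ndformulaforpsi}, together with the vanishing of $\partial \ps/\partial \eta$ at $z = 0$ and at $z = a_i$ noted in Proposition \ref{psiproperties}(3).
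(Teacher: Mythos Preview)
Your argument for (4) is essentially the same as the paper's: one multiplies numerator and denominator of \eqref{2ndformulaforpsi} by a product of powers of $(z-a_{i_s})$ and $\overline{(z-a_{i_s})}$ to obtain a quotient of real polynomials whose denominator does not vanish on $\C\times S\times S\times \C$. The paper writes this explicitly as formula \eqref{ourpsi3} with $Q(z,a)=\prod_{s}(z-a_{i_s})^{N!}$; your exponents $m_sN!$ are larger than needed but do no harm.

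For (5) there is a genuine gap. Two claims in your last paragraph fail in the interesting cases. First, the cluster centers $b_{s,*}(t,s)$ obtained from the reduced polynomial by the implicit function theorem are analytic only where $D_l\neq 0$, i.e.\ for $s\neq 0$; in general they do \emph{not} extend analytically across $s=0$, since the type of $b(t,s)$ genuinely degenerates there. A continuous extension is not enough to make the cleared numerator and denominator analytic in $(t,s)$. The paper avoids selecting individual roots analytically: it complexifies $s$, invokes Puiseux with parameter to order the roots, and then uses Lemma~2.9 of \cite{PP} to show that the symmetric combinations $Q(z(s),a(s))\sum_j Q_{k,j}(z(s),a(s))(b_j(t,s)-a_j(s))$ lie in $\C\{t,s\}$ even though the individual $b_j$ do not.

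Second, and more seriously, your assertion that after clearing ``the denominator does not vanish on a full neighborhood of the origin'' is false. The cleared denominator $Q(z(s),a(s))\overline{Q(z(s),a(s))}\bigl(\sum_k|Q_k|^2|z(s)|^d+1\bigr)$ depends only on $s$, and it \emph{does} vanish at $s=0$ whenever $z(0)$ coincides with some $a_i(0)$ in a way not reflected by the generic type (this is precisely the case of interest). The paper's remedy is different: one shows the denominator is real analytic in $s$ and not identically zero, so $\varphi(t,s,\eta)$ has the form $s^{-k}$ times a convergent power series. Boundedness of $\varphi$ near the origin---which is exactly what Proposition~\ref{psiproperties}(1) provides---then forces $k\le 0$, hence analyticity. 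This removable-singularity step is the crux of (5), and it is missing from your outline.
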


\begin{proof} 
The proof of (4) is similiar  to that of Lemma 2.7  of \cite{PP}.  Let us just sketch it.  
 Let $S=S_W$ for a partition $W$.  We consider only the case $m_0\ne 0$; the case $m_0=0$ 
is similar.  Choose the representatives $i_0, \dots ,i_k$ so that  $i_s\in W_s$.  
 Write similarly to (2.7) of \cite{PP}  
 \begin{align}\label{ourpsi3} 
\ps (z,a,b, \eta)  =  z + \frac { Q(z,a) \overline Q(z,a) \bigl (  \sum _k  \sum_{j}
  \ Q_{k,j} (z,a)  \overline  Q_k (z, a)   ( b_j - a_j)|z|^d +\eta z  \bigr) } 
{N!  Q(z,a) \overline Q(z,a)  (  \sum _k  Q_k(z,a)  \overline  Q_k (z,a )|z|^d +1) } ,
\end{align}
where 
\begin{align*}
& Q_k(z,a) = P_k ((z-a_1)^{-1} , \dots , (z-a_N)^{-1}) , \\
& Q_{k,j}(z,a,b) =
(z-a_j)^{-1} \frac {\partial  P_k}{\partial \xi_j} ((z-a_1)^{-1} , \dots , (z-a_N)^{-1}),  \\
 & Q(z,a) = Q_W(z,a) =  \prod_{s=0}^k (z-a_{i_s})^{N!} \,  .
\end{align*}
Then the denominator of the fraction in \eqref{ourpsi3}   does not vanish 
on $S$, and both the numerator and the 
denominator are real analytic on  $ \C \times S \times S\times \C $.  


The proof of (5) is similar to that of Lemma  2.8 of \cite{PP}, but because of the presence of the term 
$|z|^{2N!}$ we cannot apply the reduction to the case $z(s) \equiv 0$ by 
 subtracting $z(s)$ from every component of $b(t,s)$.  
 
  We consider $b_i (t,s)$ as the roots of a polynomial 
$
 G(z,t,s) = z^N + \sum_{i=1}^N c_i(t,s) z^{N-i} 
$
 with coefficients analytic in $t$ and $s$, that is as  power series in $(t,s)\in \K^m\times \R$.  
 The next step is to complexify the variables and  consider $c_i(t,s)$ as complex analytic germs of 
 $(t,s)\in (\C^m\times \C, (0,0))$.  By the assumption on the generalized discriminants and on the product 
 of $b_i$ the type of $b_i(t,s)$ is independent of $t$ also for $s$ complex.  Moreover, by the assumption on the 
generalized  discriminants, we may apply  to  $G$ the Puiseux with parameter theorem, see \cite{PP} \S 2.   In particular, for $s$ fixed, 
 an ordering of the roots $a_1(s), \dots ,a_N(s)$ of $G(z,0,s)$ gives, by continuity in $t$, 
 an ordering of the roots  $b_1(t,s), \dots ,b_N(t,s)$ of $F$.  Fix such an ordering and define 
 $$
\varphi (t,s) = \ps (z(s), a(s), b(t,s)),
$$
where $\ps$ is given by \eqref{ourpsi3}. ($Q$ of \eqref{ourpsi3} depends on the choice of $W$.  
We take the one given by the type of $b(t,s)$, $s\ne 0$.)  Thus defined, $\varphi$ is independent of the choice of 
the initial ordering of $a_1(s), \dots ,a_N(s)$.  Since $P_k (\xi) $ is symmetric in $\xi $, $Q(z(s),a(s)) $ and the product 
$Q(z(s),a(s)) Q_k(z(s), a(s))$  are complex analytic in $s\in \C$.      
Then, as follows from Lemma 2.9 of \cite{PP}, for each $k$,  the product 
$Q(z(s),a(s)) ( \sum_{j=1}^N  Q_{k,j} (z(s), a(s)) (b_j(t,s)-a_j(s))\in \C\{{t,s}\} $.  

Now we consider again $s\in \R$. 
  It follows from the above that  the  denominator  of the fraction in  \eqref{ourpsi3} evaluated on $z(s), a(s)$ is 
real analytic in (one variable) $s\in \R$ and not identically equal to zero.  
The numerator of this fraction, evaluated on $z(s), b(t,s),a(s)$, is analytic in $t\in \K^m$, $s\in \R$, 
$\eta \in \C$.  Hence  $\varphi(t,s)$ is of the form 
$s^{-k}$ times a power series in $(t,s, \eta)$.  Since, moreover,  $\varphi(t,s)$ is bounded  in a neighborhood of the origin, it  has to be analytic.  
 \end{proof}


\section{Construction of the general position deformation}

In this section we produce a stratified submersive family of diffeomorphisms as in section \ref{submersive}. The associated stratification is Zariski equisingular, and the family is constructed inductively by Whitney interpolation.

Consider a family of homogeneous polynomials  with coefficients in $\K =\C$ or $\R$,
 \begin{align}\label{polynomials2}
F_{i} (x_1, \ldots, x_i  )= x_i^{d_i}+ \sum_{j=1}^{d_i} A_{i,j} (x_1, \ldots, x_{i-1}) x_i^{d_i-j}, \quad i=1, \ldots,n, 
\end{align}
satisfying 
\begin{enumerate} 
\item 
 $A_{i,d_i} \equiv 0$ for all $i$ such that $d_i>0$, so that $x_i\equiv 0$ is a root of $F_i$,
\item
for every $i$, the discriminant of $F_{i,red}$ divides   $F_{i-1}$.   
\end{enumerate}
It may happen that $d_i=0$.  Then $F_i\equiv 1$ and we set by convention $F_j\equiv 1$ for $j<i$.   
We call a family $\mathcal F = \{F_i\}$ satisfying the above properties  a \emph{system of 
polynomials}.  This is a special case of the system of pseudopolynomials considered in \S 5 of 
\cite{PP}.  

The \emph{canonical stratification} $\mathcal S$ associated to the system of polynomials \eqref{polynomials2} is defined inductively. For $i=1, \ldots, n$, let us denote by $\pi_{i,i-1}:\K^i\to \K^{i-1}$ the standard projection.  Then the canonical stratification $\mathcal S_i$ of $\K^i$ is obtained from the canonical stratification 
$\mathcal S_{i-1}$ of $ \K^{i-1}$ by lifting its strata to the zero set of $F_i$ and adding, for each stratum $S\in \mathcal S_{i-1}$,  the connected components of $\pi^{-1} (S) \setminus F_i ^{-1} (0)$.   It is defined in   \S 5 of \cite{PP} by means of 
the filtration by the union of strata as follows.  For each $i$ we define a filtration
\begin{equation}\label{filtration}
\K^i = X^i_i\supset X^i_{i-1}\supset\cdots\supset X^i_0,
\end{equation}
where
\begin{enumerate} 
\item 
 $X^1_0= V(F_1)$ (which may be empty),
\item
$X^i_j= (\pi^{-1}_{i,i-1}(X^{i-1}_j)\cap V(F_i))\cup \pi^{-1}_{i,i-1}(X^{i-1}_{j-1})$ for $1\leq j< i$.
\end{enumerate}
Let $\mathcal S_i$ be the stratification associated to the filtration \eqref{filtration}.  Then we set $\mathcal S = \mathcal S_n$.

Following \S 3 of \cite{PP}, we define  a non-trivial deformation of the trivial family 
$F_i(t,x) =F_i(x)$, $t\in \K^n$,  
\begin{align}\label{definitionPhi}
\Ph (t, x) = (t, \Ps (t, x))= (t, \Ps_1(t_1, x_1), \Ps_{2} (t_1, t_2, x_1, x_{2}), \ldots , \Ps_{n} (t, x) ), 
\end{align}
by induction on $n$,
using the interpolation function \eqref{psireal}. Suppose that $\Ps_1,\dots,\Ps_{n-1}$ have been defined. To simplify notation we write $(x_1,\dots,x_n) = (x',x_n)$ and $(t_1\dots,t_n)=(t',t_n)$. From the definition of the family \eqref{polynomials2} and induction, it follows that there is an open neighborhood $U$ of $0$ in $\K^n$ such that the complex roots of $F_n$,
\begin{equation*}
a_1(\Ph_{n-1}(t',x')),\dots, a_{d_n}(\Ph_{n-1}(t',x')),
\end{equation*}
can be chosen $\K$-analytic in $t'$ for all $t\in U$. Moreover, $a_i(0,x') = a_j(0,x')$ if and only if $a_i(\Ph_{n-1}(t',x'))= a_j(\Ph_{n-1}(t',x'))$ for $t\in U$. Let 
\begin{equation*}
a(\Ph_{n-1}(t',x')) = (a_1(\Ph_{n-1}(t',x')),\dots, a_{d_n}(\Ph_{n-1}(t',x')))
\end{equation*}
 be the vector of such roots, and set
\begin{equation}\label{definitionPsi}
\Ps_{n} (t,x) : =  \ps ( x_n, a(0,x'), a(\Ph_{n-1}(t' ,x')), t_n).
\end{equation}

\begin{prop}\label{deformation}
If $U\subset \K^n$ is a sufficiently small neighborhood of the origin, then  
$$\Ph: U \times \K ^n \to U \times \K ^n$$
 is a homeomorphism with the following properties:
\begin{enumerate}
\item
$\Ph$ is semialgebraic, with $\Ph(0,x)=(0,x)$ for all $x\in \K ^n$, and it is an arc-wise analytic trivialization of the projection $U\times \K^n\to U$.
\item
$\Ps$ is $\K^*$-equivariant:
$$
\Ps (t, \lambda x) =\lambda \Ps (t, x) 
$$
for all $x\in \K^n$ and $\lambda\in \K$.  
\item
$\Ph$ preserves the canonical stratification $\mathcal S$ associated to the system of polynomials $\mathcal F$. 
\item
For each stratum $S\in \mathcal S$ the restriction $\Ph : U\times S \to U\times S$ is a real analytic diffeomorphism.  
\item
For each stratum $S\in\mathcal S$ and every $x\in S$ the orbit map $\Ps ^x: U  \to S$ is a $\K$-analytic submersion.  
\end{enumerate}
\end{prop}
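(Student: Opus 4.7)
The plan is induction on $n$, using the explicit recursive formula \eqref{definitionPsi} together with Propositions \ref{psiproperties} and \ref{psiproperties2}. The base case $n=1$ is a direct application of those two propositions to the single-variable polynomial $F_1$, with $\eta=t_1$. For the inductive step, I assume $\Ph_{n-1}$ has been constructed on $U_{n-1}\times\K^{n-1}$ with properties (1)--(5), and I analyze $\Ph$ on a sufficiently small neighborhood $U$ of the origin.

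The first task is the homeomorphism property together with semialgebraicity and the identity $\Ph(0,x)=(0,x)$. Condition (ii) on the system $\mathcal F$ (the discriminant of $F_{n,red}$ divides $F_{n-1}$) allows the roots $a_i$ of $F_n$ to be chosen $\K$-analytic in $t'$ with coincidence type independent of $t'\in U$, so the Lipschitz constant $\gamma=\gamma(t')$ of the root deformation tends to zero as $t'\to 0$. Combined with $|\eta|=|t_n|$ small, Proposition \ref{psiproperties}(2) yields that $\ps(\cdot,a(0,x'),a(\Ph_{n-1}(t',x')),t_n)\colon\C\to\C$ is bi-Lipschitz with uniform constants close to $1$; coupling this with the inductive homeomorphism gives a homeomorphism on $U\times\K^n$. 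The identity at $t=0$ is visible from \eqref{psireal}, and semialgebraicity is immediate from the same formula. For the arc-wise analytic part of (1), given a real analytic arc $x(s)=(x'(s),x_n(s))$, the inductive arc-wise analyticity of $\Ph_{n-1}$ ensures that $b(t',s):=a(\Ph_{n-1}(t',x'(s)))$ has symmetric functions analytic in $(t',s)$; together with the constancy of root type along $t'$ and the Puiseux-with-parameter reduction from \S2 of \cite{PP}, this verifies the hypotheses of Proposition \ref{psiproperties2}(5), yielding analyticity of $\Ps_n(t,x(s))$ in $(t,s,\eta)$.

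Properties (2)--(5) then follow relatively cleanly. Property (2) uses the scaling identity \eqref{lambda} together with the homogeneity of $F_n$: homogeneity forces $a(0,\lambda x')=\lambda a(0,x')$, while the inductive $\K^*$-equivariance of $\Ps_{n-1}$ gives $a(\Ph_{n-1}(t',\lambda x'))=a(\lambda\Ps_{n-1}(t',x'))=\lambda a(\Ph_{n-1}(t',x'))$. Property (3) is immediate because $\ps$ sends $a_i\mapsto b_i$, so $V(F_n)$ is preserved, and the inductive preservation handles the remaining filtration pieces \eqref{filtration}. Property (4) follows by applying Proposition \ref{psiproperties2}(4) stratum-by-stratum using \eqref{ourpsi3}, combined with the inductive analyticity of $\Ph_{n-1}$ on strata. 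For property (5), I split into cases: if $x_n$ is neither a root of $F_n$ at $x'$ nor zero, then $\partial\Ps_n/\partial t_n=\partial\ps/\partial\eta\ne 0$ by Proposition \ref{psiproperties}(3), supplementing the inductive submersion in the remaining directions; if $x$ lies in $V(F_n)\cup\{x_n=0\}$, then $\Ps_n^x$ is transported as the corresponding root (or $0$) under $\Ph_{n-1}$, reducing the submersion statement to the inductive case.

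The main obstacle I expect is verifying at each inductive step that the pulled-back roots $b(t',s)=a(\Ph_{n-1}(t',x'(s)))$ satisfy the discriminant-type Puiseux hypotheses of Proposition \ref{psiproperties2}(5), namely that the last nonzero generalized discriminant factors as $s^k u(t',s)$ with $u(0,0)\ne 0$, and similarly for the product of the roots. This is exactly what condition (ii) in the definition of a system of polynomials is designed to guarantee; the bookkeeping amounts to reprising the inductive framework for canonical stratifications and parametrized Puiseux expansions developed in \S\S3 and 5 of \cite{PP}, now enriched by the extra parameter $\eta=t_n$ whose role is governed by property (3) of Proposition \ref{psiproperties}.
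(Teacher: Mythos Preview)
Your proposal is correct and follows essentially the same route as the paper: induction on $n$, with Proposition~\ref{psiproperties}(2) giving the bi-Lipschitz homeomorphism, Proposition~\ref{psiproperties2}(5) (together with the \cite{PP} framework for parametrized Puiseux expansions) giving arc-wise analyticity, the scaling identity \eqref{lambda} and homogeneity giving (2), Proposition~\ref{psiproperties2}(4) giving (4), and the same case split for (5) according to whether the stratum lies in $V(F_n)$ or in its complement, invoking Proposition~\ref{psiproperties}(3) for the latter. One cosmetic remark: since condition (i) forces $x_n\equiv 0$ to be a root of $F_n$, your separate mention of the case $x_n=0$ is already absorbed into the $V(F_n)$ case.
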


\begin{proof}
$\Ph$ is semialgebraic and satisfies $\Ph(0,x)=(0,x)$ for all $x\in \K ^n$ by construction. It is an arc-wise analytic trivialization by Proposition \ref{psiproperties2}.  
The proof follows precisely the arguments of \cite{PP} (p.\ 277, proof of (Z4)). 

Since $F_n$ (and $F_i$ for all $i$) is homogeneous, its roots $a (t,x')$ satisfy 
$a (t,\lambda x')= \lambda a(t,x')$.  Therefore (2) follows from \eqref{lambda}.

Condition (3) follows from the construction (see \cite{PP} \S 5.1, p.\ 282), and (4) follows from (4) of Proposition 
 \ref{psiproperties2}. 

To show (5) we use the inductive constructions of the canonical stratification and the 
deformation $\Ps$.  Let 
 \begin{align}
 \Ps'  (t' , x' )= (\Ps_1(t_1, x_1), \Ps_{2} (t_1, t_2,,x_1, x_{2}), \ldots , \Ps_{n-1} (t', x') ).  
\end{align}
 Let $\mathcal S'$ denote  the canonical stratification of $\K^{n-1}$
 associated to the system of polynomials $\{F_i\}_{i<n} $.  
 By the inductive assumption, there is a neighborhood $U' $ of the origin in $\K^{n-1} $ 
 such that  for every stratum $S' \in \mathcal S'$, 
$\Ps '$ induces a real analytic  map $ \Ps' : U'  \times S' \to S'$ for which  the 
associated  orbit map ${\Ps'} ^{x'}: U'  \to S'$ is submersive for every $x' \in S'$.  
Let $X=F_n^{-1}(0) \subset \K^n$.   
 By construction, the strata of $\mathcal S$ are of two types.  If $S\in \mathcal S$ is of the first type, 
 then $S\subset X$ and  there is $S' \in \mathcal S'$ such that the projection $\pi :\K^n\to \K^{n-1}$ induces  a finite analytic covering $S\to S'$ whose sections are given by the roots of $F_n$.  Since 
 $ \ps ( a_i(0,x'), a(0,x'), a(\Ph_{n-1}(t' ,x')), t_n) =  a_i (\Ph_{n-1}(t' ,x'))$ independently of $t_n$, we see 
 that $\Ps (t,x)$, $x\in S$, does not depend on  $t_n$, and  $ \Ps : U  \times S \to S$ is the lift of 
 $ \Ps' : U'  \times S' \to S'$.  That is, the following diagram commutes:  
  \[
  \xymatrix{
     U'\times S \ar^{\Ps}[rr] \ar_{id\times \pi }[d] && S \ar^{\pi} [d] \\
    U'\times S'  \ar^{\Ps' }[rr]  && S' 
  } 
\]  
Hence $\pi\circ \Ps^x = {\Ps'} ^{\pi(x)}$  is submersive.  

If $S\in \mathcal S$ is of the second type, 
 then there is $S' \in \mathcal S'$ such that $S$ is a connected component of 
 $ \pi^{-1}(S')\cap (\K^n\setminus X)$; that is, $S$ is  open in  $S' \times \K$.   The jacobian matrix 
 $\frac {\partial \Ps } {\partial t}$ is triangular with non-zero terms on the diagonal.  
 Indeed, the latter claim follows from (3) of Proposition \ref{psiproperties} for 
$\frac {\partial \Ps_n } {\partial t_n}$, and by inductive assumption for the other terms.  
\end{proof}

\begin{cor}\label{affine}
The map $\Ps: U \times \K ^n \to  \K ^n$ is a  stratified submersive family of diffeomorphisms.  
\end{cor}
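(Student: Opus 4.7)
The plan is to unfold the definition of a \emph{stratified submersive family of diffeomorphisms} from Section \ref{submersive} and verify each of its ingredients directly from the five conclusions of Proposition \ref{deformation}. By that definition, for each stratum $S\in \mathcal S$ of the canonical stratification one needs to check three things: (i) $\Ps$ preserves $S$, i.e.\ $\Ps(U\times S)\subset S$; (ii) for every $t\in U$ the slice $\Ps_t:S\to S$ is a $C^1$-diffeomorphism; (iii) for every $(t,x)\in U\times S$ the differential of the orbit map $\Ps^x:U\to S$ at $t$ is surjective.

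The verification is then a line-by-line application of Proposition \ref{deformation}. First, condition (i) is exactly item (3) of Proposition \ref{deformation}, which asserts that $\Ph$ preserves $\mathcal S$. Next, condition (ii) follows from item (4): since $\Ph:U\times S\to U\times S$ is a real analytic diffeomorphism commuting with the projection to $U$, fixing $t\in U$ gives a real analytic, hence $C^1$, diffeomorphism $\Ps_t:S\to S$. Finally, condition (iii) is the content of item (5), which states that for every $x\in S$ the orbit map $\Ps^x:U\to S$ is a $\K$-analytic submersion; in particular its real differential is surjective at every $t$ (in the case $\K=\C$, surjectivity of the $\C$-linear derivative immediately implies surjectivity of the underlying $\R$-linear derivative, which is what the definition in Section \ref{submersive} requires, since $\K^n$ and $S$ are there regarded as smooth real manifolds).

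There is essentially no serious obstacle: everything needed has already been packaged into Proposition \ref{deformation}, and the corollary is just a matter of matching vocabulary. The only point requiring a word of comment is the complex case, where one has to observe that $\K$-analytic submersivity yields real submersivity, and that the real analytic diffeomorphisms on the strata are in particular $C^1$ diffeomorphisms of the underlying smooth manifolds; both observations are immediate. Thus $\Ps:U\times \K^n\to \K^n$ is a stratified submersive family of diffeomorphisms for the canonical stratification $\mathcal S$, which is the assertion of the corollary.
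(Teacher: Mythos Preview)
Your proposal is correct and matches the paper's approach: the paper states the corollary immediately after Proposition \ref{deformation} with no separate proof, treating it as a direct consequence of items (3), (4), and (5) of that proposition, which is exactly the unpacking you give. Your remarks on the complex case (real versus $\K$-analytic submersivity, real analytic implies $C^1$) are the only details the paper leaves entirely implicit, and they are indeed routine.
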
 

\subsection{Proof of Theorem \ref{projective}}
Let $g_{ij}\in \K[x_1, \dots ,x_{n+1}]$ be homogeneous polynomials generating the homogeneous ideals defining $V_i\subset \proj^n$.  Let $F_{n+1} (x_1, \dots ,x_{n+1})$ be the product of all $g_{ij}$.  We complete $F_{n+1}$ to a system of polynomials and apply the above construction of the deformation $\Ps $.  Thanks to property (2) of Proposition \ref{deformation}, $\Ps$ can be projectivised to a map  $U \times \proj ^{n}  \to \proj ^{n} $.
By Proposition 1.9 of \cite{PP},  $\Ps$ preserves the zero sets of each $g_{ij}$   and hence 
each $V_i$.  
 This shows Theorem \ref{projective}.
\qed


\end{document}